\newcommand{\ind}{\mathds{1}}
\newcommand{\inner}[2]{\langle #1, #2\rangle}
\renewcommand{\div}[1]{\text{div}\,(#1)}
\newcommand{\R}{\mathbb{R}}
\let\goodlabel=\label
\newcommand{\inlabel}[1]{\refstepcounter{equation}\goodlabel{#1}(\theequation)}
\DeclarePairedDelimiter\abs{\lvert}{\rvert}
\DeclarePairedDelimiter\norm{\lVert}{\rVert}
\DeclarePairedDelimiter\japan{\langle}{\rangle}
\def\XXint#1#2#3{{\setbox0=\hbox{$#1{#2#3}{\int}$ }
\vcenter{\hbox{$#2#3$ }}\kern-.6\wd0}}
\theoremstyle{plain}
  \newtheorem{theorem}{Theorem}
  \newtheorem{corollary}[theorem]{Corollary}
  \newtheorem{lemma}[theorem]{Lemma}
\theoremstyle{definition}
\newenvironment{customthm}[1]
  {\innercustomthm}
  {\endinnercustomthm}
\title{Recovery of the Derivative of the Conductivity at the Boundary}
\author[ ]{Ponce-Vanegas, Felipe}
\affil[ ]{\small BCAM - Basque Center for Applied Mathematics}
\affil[ ]{\tt fponce@bcamath.org}
\date{}
\begin{document}
\maketitle

\begin{abstract}
We describe a method to reconstruct the conductivity and its normal derivative at the boundary from the knowledge of the potential and current measured at the boundary. This boundary determination implies the uniqueness of the conductivity in the bulk when it lies in $W^{1+\frac{n-5}{2p}+,p}$, for dimensions $n\ge 5$ and for $n\le p<\infty$.  
\end{abstract}

Electrical Impedance Imaging is a technique to recover the conductivity in the bulk of a body from measurements of potential and current at the boundary. The potential $u$ in a domain $\Omega\subset\R^n$ satisfies the equation
\begin{equation}\label{eq:BVP}
\begin{aligned}
\div{\gamma\nabla u} &= 0 \\
u|_{\partial\Omega} &= f,
\end{aligned}
\end{equation}
where $\gamma$ is the conductivity and $f\in H^\frac{1}{2}(\partial\Omega)$ is the potential at the boundary --the definitions of the spaces used here are placed at the end of the article. The conductivity satisfies the condition $0<c\le \gamma\le C$. The current measured at the boundary is $\gamma\partial_\nu u|_{\partial\Omega}$, where $\nu$ is the outward-pointing normal vector. The operator $\Lambda_\gamma$ that maps $u|_{\partial\Omega}$ to $\gamma\partial_\nu u|_{\partial\Omega}$ is known as the Dirichlet-to-Neumann map, and it is defined as the functional $\Lambda_\gamma:H^\frac{1}{2}(\partial\Omega)\mapsto H^{-\frac{1}{2}}(\partial\Omega)$ given by
\begin{equation*}
\inner{\Lambda_\gamma f}{g} := \int_{\Omega}\gamma\nabla u\cdot\nabla v,
\end{equation*}
where $u$ solves the boundary value problem \eqref{eq:BVP} and $v\in H^1(\Omega)$ is \textit{any} extension of $g\in H^\frac{1}{2}(\partial\Omega)$. If we choose $v$ such that $\div{\gamma\nabla v}=0$, then we see that $\Lambda_\gamma$ is symmetric.

In \cite{MR590275} Calderón posed the problem of deciding whether the conductivity can be uniquely recovered from the data at the boundary, \textit{i.e.} whether $\Lambda_{\gamma_1}=\Lambda_{\gamma_2}$ implies that $\gamma_1=\gamma_2$. One of the earliest results, due to Kohn and Vogelius in \cite{MR739921}, is that $\Lambda_{\gamma_1}=\Lambda_{\gamma_2}$ implies that $\partial^N_\nu\gamma_1=\partial^N_\nu\gamma_2$ at the boundary for every $N$ when $\gamma_1,\gamma_2\in C^\infty$. Sylvester and Uhlmann \cite{MR873380} made use of this result to prove uniqueness in the bulk for $C^2$ conductivities.

It is hard to prove uniqueness in the bulk, so uniqueness at the boundary may be considered as a toy problem; moreover, many proofs of inner uniqueness use uniqueness at the boundary as the first step, and this is in fact the motivation behind this article. For $\gamma_1,\gamma_2\in W^{s,p}(\Omega)$, some arguments need to extend the conductivities $\gamma_1$ and $\gamma_2$ to the whole space in such a way that $\gamma_1=\gamma_2$ in $\R^n\backslash\Omega$, and $\gamma_1,\gamma_2\in W^{s,p}(\R^n)$. Brown proved in \cite{MR1881563} that, under mild conditions of regularity, the conductivity can be recovered at the boundary. In particular, if $\gamma_1,\gamma_2\in W^{s,p}(\Omega)$ for $1\le s\le 1+\frac{1}{p}$ and $p\ge n$, then $\Lambda_{\gamma_1}=\Lambda_{\gamma_2}$ implies that $\gamma_1=\gamma_2$ at $\partial\Omega$, and then, by function space arguments, $\gamma_1$ and $\gamma_2$ can be adequately extended to $\R^n$; for details the reader is referred to \cite{MR2026763}. The possibility of this extension was used by Haberman \cite{MR3397029}, and by Ham, Kwon and Lee \cite{HKL} to prove uniqueness inside $\Omega\subset\R^n$ when $3\le n\le 6$.

When $\gamma_1,\gamma_2\in W^{s,p}(\Omega)$ for $s>1+\frac{1}{p}$, then to extend both conductivities adequately to $\R^n$ the condition $\partial_\nu\gamma_1=\partial_\nu\gamma_2$ at $\partial\Omega$ is necessary. The result of Kohn and Vogelius holds for smooth conductivities, so we cannot use it with rough conductivities. The main result of this article is that, under mild conditions of regularity, the conductivity and its normal derivative at the boundary is uniquely determined by $\Lambda_\gamma$; furthermore, our theorem provides a method of reconstruction.

\begin{theorem}\label{thm:recovery}
Suppose that $0<c\le\gamma\le C$ and that $\Omega\subset\R^n$ is a Lipschitz domain. 

{\bf (A)} If $\gamma\in W^{s,p}(\Omega)$ for $s>\frac{1}{p}$ and $1<p<\infty$, then for $y\in\partial\Omega$ a.e. there exist a family of functions $f_{0,h}$ and constants $c_{0,h}\sim 1$ such that
\begin{equation}\label{eq:thm_recovery_A}
\inner{\Lambda_\gamma f_{0,h}}{f_{0,h}} = c_{0,h}\gamma(y)+o(1)\quad \text{as } h\to 0.
\end{equation}
The constants $c_{0,h}$ do not depend on the conductivity.

{\bf (B)} If $\gamma\in W^{s,p}(\Omega)$ for $s>1+\frac{1}{p}$ and $2\le p<\infty$, or for $s>\frac{3}{p}$ and $1<p<2$, then for $y\in\partial\Omega$ a.e. there exist a family of functions $f_{1,h}$ and constants $c_{0,h},c_{1,h}\sim 1$ such that
\begin{equation}\label{eq:thm_recovery_B}
\inner{\Lambda_\gamma f_{1,h}}{f_{1,h}}-c_{0,h} = c_{1,h}\partial_\nu\log\gamma(y)h+o(h)\quad \text{as } h\to 0.
\end{equation}
The constants $c_{0,h}$ and $c_{1,h}$ do not depend on the conductivity.
\end{theorem}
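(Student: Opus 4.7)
Both parts rest on the variational identity
\begin{equation*}
\inner{\Lambda_\gamma f}{f} = \int_\Omega \gamma|\nabla u|^2 = \int_\Omega \gamma|\nabla v|^2 - \int_\Omega \gamma|\nabla(v-u)|^2,
\end{equation*}
valid for any extension $v\in H^1(\Omega)$ of $f$, with $u$ the $\gamma$-harmonic extension. The cross term vanishes because $\div{\gamma\nabla u}=0$ and $v-u\in H^1_0(\Omega)$. The goal is to choose $v_h$ concentrated at a point $y\in\partial\Omega$, approximately harmonic, so that the middle term admits a computable expansion in $\gamma(y)$ and $\partial_\nu\gamma(y)$, and the last term is negligible.

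After a bi-Lipschitz boundary chart flattening $\partial\Omega$ near $y$, reduce to $y=0$ and $\Omega=\{x_n>0\}$ locally. Fix a tangential unit vector $\xi\in\R^{n-1}$ and a smooth bump $\eta$, and take
\begin{equation*}
v_h(x',x_n) = h^{\beta}\,\eta\!\bigl(x'/h^{1/2},\,x_n/h\bigr)\,e^{i\xi\cdot x'/h}\,e^{-x_n/h}.
\end{equation*}
The plane-wave factor is harmonic, oscillates tangentially at frequency $h^{-1}$ and decays normally on scale $h$; the envelope localizes at $y$, and the exponent $\beta$ normalizes the form to be $O(1)$. For part (A) set $f_{0,h}:=v_h|_{\partial\Omega}$. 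For part (B), modify the envelope (or renormalize using the value $\gamma(y)$ recovered from part (A)) so that the leading term of the expansion becomes the $\gamma$-independent constant $c_{0,h}$, leaving $\partial_\nu\log\gamma(y)\,h$ as the next order.

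Expansion of the middle term: write $|\nabla v_h|^2 = (2/h^2)|v_h|^2 + (\text{envelope terms})$ and Taylor-expand $\gamma(x) = \gamma(y) + x_n\partial_\nu\gamma(y) + R(x)$ along the normal direction. Integration against $e^{-2x_n/h}$ turns the factor $x_n$ into a factor of $h$. At a.e.\ $y\in\partial\Omega$, $\gamma$ admits a Lebesgue value (for (A)) and the trace of $\partial_\nu\gamma$ admits a Lebesgue value (for (B), under the hypothesis $s>1+1/p$ if $p\ge 2$ or $s>3/p$ if $p<2$, via the appropriate trace theorem), so that the remainder $R$ contributes $o(1)$ to the leading order and $o(h)$ to the next. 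The constants $c_{0,h},c_{1,h}$ come out as explicit Gaussian-type integrals of $\eta$ and $\xi$, manifestly independent of $\gamma$.

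\textbf{Main obstacle.} Lax--Milgram applied to the equation satisfied by $u_h-v_h\in H^1_0(\Omega)$ yields
\begin{equation*}
\int_\Omega \gamma|\nabla(v_h-u_h)|^2 \lesssim \|\div{\gamma\nabla v_h}\|_{H^{-1}(\Omega)}^2,
\end{equation*}
and since the plane wave is harmonic, $\div{\gamma\nabla v_h}=\nabla\gamma\cdot\nabla v_h+(\text{envelope terms})$. Proving that this quantity is $o(1)$ for (A) and, crucially, $o(h)$ for (B) is the heart of the argument: one must exploit the tangential oscillation of $v_h$ (which provides a gain of $h$ after one integration by parts against a test function $\phi\in H^1_0$) and balance it against a fractional Sobolev norm of $\nabla\gamma$. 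The regularity thresholds $s>1+1/p$ for $p\ge 2$ and $s>3/p$ for $p<2$ are exactly where this balance closes, and the asymmetry reflects the distinct Sobolev multiplication and trace estimates available in those two ranges of $p$.
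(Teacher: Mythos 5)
Your overall skeleton (quadratic form equals the Dirichlet energy of an approximate solution, minus a correction controlled by $\norm{\div{\gamma\nabla v_h}}_{H^{-1}}^2$ via Lax--Milgram) coincides with the paper's, but your choice of test functions does not: you take Brown-type oscillatory wave packets concentrated anisotropically at scales $h^{1/2}$ (tangential) and $h$ (normal), whereas the paper deliberately abandons oscillatory solutions in favour of Alessandrini-type singular solutions $u_h(x)=(x_n+h)/\abs{x+he_n}^{n}$, which concentrate isotropically at scale $h$. For part (A), which only requires $o(1)$ accuracy, your route is essentially Brown's original argument and can be made to work. For part (B) there is a genuine gap. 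With tangential support of width $h^{1/2}$, the main term averages $\gamma$ over a slab of tangential radius $h^{1/2}$; after the linear term in $x'$ is removed by symmetry of the envelope, the remaining Taylor remainder of $\gamma$ over that tangential distance is only $O\big(h^{(1+\alpha)/2}\big)$ for the small $\alpha<s-1-\frac{1}{p}$ available at a.e.\ boundary point, and $h^{(1+\alpha)/2}\gg h$, so it swamps the term $c_{1,h}\partial_\nu\log\gamma(y)\,h$ you are trying to isolate. Worse, for a Lipschitz (indeed even $C^1$) boundary the graph deviates from its tangent plane by $o(\abs{x'})=o(h^{1/2})$ over the tangential support, again $\gg h$, so the identification of the normal direction in $e^{-x_n/h}$ is corrupted at exactly the order you need; and the bi-Lipschitz flattening you invoke turns $\Delta$ into $\div{A\nabla\cdot}$ with $A$ merely bounded measurable, destroying the harmonicity of the plane wave. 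These obstructions are precisely why the paper replaces the oscillatory ansatz by singular solutions, for which the boundary and the conductivity need only be resolved on the single scale $h$ (the boundary error then enters through $\int_{\partial\Omega}u_h^2\,\nabla\log\gamma(0)\cdot(\nu-e_n)$, handled by the Lebesgue-point property of $\nu$).

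A second, independent gap is that both your main-term expansion and your error estimate presuppose quantitative Lebesgue-point bounds at a.e.\ boundary point, of the form $r^{-n}\int_{B_r(y)\cap\Omega}\abs{\nabla\gamma-\nabla\gamma(y)}\,dx\le C_y\,r^{\alpha}$ for $\gamma\in W^{s,p}$ with $s>1+\frac{1}{p}$ (and the analogous zeroth-order statement for part (A)). This is not an off-the-shelf trace theorem: traces are defined only up to sets of surface measure zero and say nothing about solid averages over $B_r(y)\cap\Omega$. It is the paper's main technical contribution (Theorems~\ref{thm:Boundary_Approximation} and~\ref{thm:difference_control}, proved via a Littlewood--Paley trace inequality on Lipschitz graphs, Lemma~\ref{lemm:trace_bound}), and your argument would need it in full before either expansion closes. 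Finally, your claim that one tangential integration by parts against $e^{i\xi\cdot x'/h}$ yields the factor of $h$ needed to make $\norm{\div{\gamma\nabla v_h}}_{H^{-1}}^2=o(h)$ is unsubstantiated: that integration by parts lands on $\nabla\gamma$ and costs a further derivative of $\gamma$, which the hypothesis $s>1+\frac{1}{p}$ does not supply; the paper obtains the corresponding bound by an entirely different mechanism (Hardy's inequality combined with the boundary Lebesgue-point estimate, with no gain from oscillation).
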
 

No attempt is made to get the best error terms implicitly involved in \eqref{eq:thm_recovery_A} and \eqref{eq:thm_recovery_B}.

The condition $\gamma\in W^{s,p}(\Omega)$, for $s>l+\frac{1}{p}$ and $l=0$ or 1, is the lowest regularity needed to make sense of the trace values of $\gamma$ and of $\partial_\nu \gamma$ respectively. In fact, by the trace theorem $\norm{\partial^l_\nu\gamma}_{L^p(\partial\Omega)}\le C\norm{\gamma}_{s,p}$ if $s>l+\frac{1}{p}$; the reader is referred to \cite{MR884984, MR781540} for details. 

The proof is mainly inspired by the work of Brown \cite{MR1881563}, who used highly oscillatory solutions to recover the value of $\gamma|_{\partial\Omega}$. We borrow many of his arguments, but we do not use oscillatory solutions, instead we follow Alessandrini \cite{MR1047569} and use singular solutions.

The motivation of the proof comes from the expansion, at least in the smooth class, $\Lambda_\gamma \sim \lambda^{1}+\lambda^0+\cdots$, where $\lambda^i\in S^i$ are pseudo-differential operators. This was proved by Sylvester and Uhlmann in \cite{MR924684}, and they showed that the information about $\partial^l_\nu\gamma$ at $\partial\Omega$ can be extracted from $\lambda^{1-l}$. Therefore, we try to use approximated solutions of \eqref{eq:BVP}, so that the boundary data $f$ concentrates as a Dirac's delta at some point on the boundary, and heuristically we get $\Lambda_\gamma(\delta_0)$. We follow this argument in Section~\ref{sec:Recovery}.

The main tool in our investigation is an approximation property at almost every point on the boundary. We did not find a suitable reference to the approximation we needed, so we include a proof here in Section~\ref{sec:Approximation}.

\begin{theorem}\label{thm:Boundary_Approximation}
Suppose that $\Omega\subset\R^n$ is a domain with Lipschitz boundary. If $f\in B^{s,p}(\Omega)$ for $1+\frac{1}{p}>s>\frac{1}{p}$, then for $0\le\alpha<s-\frac{1}{p}$ and for $y\in\partial\Omega$ a.e. it holds that
\begin{equation}
\Big(\frac{1}{r^n}\int_{B_r(y)\cap\Omega}\abs{f(x)-f(y)}^q\,dx\Big)^\frac{1}{q} \le Cr^\alpha,\quad \text{where } r\le 1 \text{ and } 1\le q\le p.
\end{equation}
The constant $C$ depends on $y$.
\end{theorem}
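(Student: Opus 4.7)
The plan is to extend $f$ to all of $\R^n$ and then prove the estimate for the extended function along the Lipschitz hypersurface $\partial\Omega$. Apply a Rychkov-type universal extension operator to obtain $f \in B^{s,p}(\R^n)$; the Lipschitz regularity of $\partial\Omega$ ensures $|B_r(y) \cap \Omega| \gtrsim r^n$ for $y \in \partial\Omega$ and $r \le 1$, so it is enough to show
$$\Big(\frac{1}{r^n}\int_{B_r(y)} |f(x) - Tf(y)|^q\, dx\Big)^{1/q} \le C r^\alpha,\qquad r \le 1,$$
for $\partial\Omega$-almost every $y$, where $Tf \in B^{s-1/p,p}(\partial\Omega)$ is the well-defined trace. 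Since the left-hand side is monotone in $q$, it suffices to treat $q=p$.

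The main step is a Littlewood--Paley estimate. Decompose $f = \sum_{j\ge 0} f_j$ with $\widehat{f_j}$ localized to frequencies $|\xi| \sim 2^j$ and with $(\sum_j (2^{js}\|f_j\|_{L^p})^p)^{1/p} \sim \|f\|_{B^{s,p}}$. Fix $y \in \partial\Omega$ and $r \sim 2^{-k}$, and split the sum at $j = k$. For $j < k$, use that $f_j$ varies on the scale $2^{-j} > r$ to obtain the Peetre-style pointwise bound $|f_j(x) - f_j(y)| \lesssim r\, 2^j\, (Mf_j)(y)$ on $B_r(y)$, where $M$ is a Hardy--Littlewood-type maximal operator. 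For $j \ge k$, use the crude bound $|f_j(x)| + |f_j(y)|$. Combining and taking the $L^p$ average over $B_r(y)$,
$$r^{-\alpha}\Big(\dashint_{B_r(y)} |f - f(y)|^p\Big)^{1/p} \lesssim \sum_{j < k} r^{1-\alpha} 2^j (Mf_j)(y) + \sum_{j \ge k} r^{-\alpha} (Mf_j)(y) + \sum_{j \ge k} r^{-\alpha} |f_j(y)|.$$

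To finish, I would show that the supremum over $r \le 1$ of the right-hand side is finite $\partial\Omega$-a.e.\ by bounding its $L^p(\partial\Omega)$ norm. This rests on the trace-type inequality $\|g\|_{L^p(\partial\Omega)} \lesssim 2^{j/p}\|g\|_{L^p(\R^n)}$ valid for functions $g$ with frequency support in $|\xi|\sim 2^j$, a standard consequence of the Sobolev trace on Lipschitz surfaces. Summing the resulting geometric series in $j$ converges precisely when $\alpha < s - 1/p$, giving the claimed bound and simultaneously exhibiting the sharpness of the range of $\alpha$. The main obstacle is establishing this trace inequality for Littlewood--Paley pieces on a merely Lipschitz surface, and verifying that $\sum_j f_j(y) \to Tf(y)$ for $\partial\Omega$-a.e.\ $y$; both rely on the hypothesis $s > 1/p$, which is precisely what makes the trace well defined.
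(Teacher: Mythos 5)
Your proposal is correct and follows essentially the same route as the paper's: a Littlewood--Paley decomposition split at frequency $\sim 1/r$, the trace inequality $\norm{P_\lambda f}_{L^p(\Gamma)}\lesssim \lambda^{\frac{1}{p}}\norm{f}_{L^p(\R^n)}$ for band-limited pieces on a Lipschitz graph (the paper's Lemma~\ref{lemm:trace_bound}, which is indeed the main technical obstacle), and a geometric sum that converges precisely when $\alpha<s-\frac{1}{p}$, together with the a.e.\ convergence of $P_{\le M}f$ on the boundary. The only organizational differences are that the paper packages the a.e.\ conclusion through an integrated Gagliardo-type seminorm over $\Gamma$ (Theorem~\ref{thm:difference_control}) rather than an $L^p(\partial\Omega)$ bound on a maximal operator, and uses the fundamental theorem of calculus plus the trace inequality in place of your Peetre maximal functions for the low-frequency blocks; these devices are interchangeable here.
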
 

As a consequence of Theorem~\ref{thm:recovery} and a result of the author in \cite[Thm. 4]{PV}, we get the following theorem.
\begin{theorem}
For $n\ge 5$ suppose that $\Omega\subset\R^n$ is a Lipschitz domain. If $\gamma_1$ and $\gamma_2$ are in $W^{1+\frac{n-5}{2p}+, p}(\Omega)\cap L^\infty(\Omega)$ for $n\le p<\infty$, and if $\gamma_1,\gamma_2\ge c>0$, then 
\begin{equation}
\Lambda_{\gamma_1}=\Lambda_{\gamma_2}\text{ implies that } \gamma_1=\gamma_2. 
\end{equation}
\end{theorem}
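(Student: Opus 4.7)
The plan is to reduce the stated uniqueness problem on $\Omega$ to the bulk uniqueness result \cite[Thm.~4]{PV}, which requires conductivities defined on all of $\R^n$. The bridge between the two is the standard folklore: if $\gamma_1,\gamma_2$ are to be extended to $\tilde\gamma_1,\tilde\gamma_2\in W^{s,p}(\R^n)$ that agree outside $\Omega$, then the traces along $\partial\Omega$ of $\gamma$ must match, and if $s>1+\tfrac{1}{p}$ so must the traces of $\partial_\nu\gamma$; otherwise the jump across $\partial\Omega$ destroys $W^{s,p}$ regularity. Theorem~\ref{thm:recovery} is tailor-made to supply exactly these compatibility conditions.

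Concretely, I would first apply Theorem~\ref{thm:recovery}(A) to each $\gamma_i$ at the same point $y\in\partial\Omega$ and with the same family $\{f_{0,h}\}$. Since $\Lambda_{\gamma_1}=\Lambda_{\gamma_2}$, the asymptotic $\langle\Lambda_{\gamma_i}f_{0,h},f_{0,h}\rangle=c_{0,h}\gamma_i(y)+o(1)$ forces $\gamma_1(y)=\gamma_2(y)$ for almost every $y\in\partial\Omega$. For $n\geq 7$ and $p\geq n$, the hypothesis $s>1+\tfrac{n-5}{2p}$ also satisfies the threshold $s>1+\tfrac{1}{p}$ of Theorem~\ref{thm:recovery}(B) (with $p\geq 2$), and comparing the next-order asymptotic $\langle\Lambda_{\gamma_i}f_{1,h},f_{1,h}\rangle-c_{0,h}=c_{1,h}\,\partial_\nu\log\gamma_i(y)\,h+o(h)$ yields $\partial_\nu\log\gamma_1=\partial_\nu\log\gamma_2$ a.e.\ on $\partial\Omega$; combined with the already established equality of $\gamma_1$ and $\gamma_2$ on $\partial\Omega$, this gives $\partial_\nu\gamma_1=\partial_\nu\gamma_2$ a.e.\ on $\partial\Omega$. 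For $n=5,6$ the regularity exponent $s$ may fall in the range $(1+\tfrac{n-5}{2p},\,1+\tfrac{1}{p}]$, in which case no derivative matching is needed and part~(A) alone suffices.

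Next, using the fractional-Sobolev extension theory on Lipschitz domains (as in \cite{MR884984,MR781540}), I would construct extensions $\tilde\gamma_i\in W^{s,p}(\R^n)\cap L^\infty(\R^n)$, with $\tilde\gamma_i\geq c>0$, such that $\tilde\gamma_1\equiv\tilde\gamma_2$ on $\R^n\setminus\Omega$. The boundary compatibility from Step~1 is exactly what allows the two extensions to be glued together across $\partial\Omega$ without losing a fraction of a derivative; outside a large ball both can be taken equal to a fixed constant. Since $\Lambda_{\tilde\gamma_i}$ on $\partial\Omega$ only depends on $\tilde\gamma_i|_\Omega=\gamma_i$, the hypothesis $\Lambda_{\gamma_1}=\Lambda_{\gamma_2}$ is preserved for the extensions. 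Applying \cite[Thm.~4]{PV} to $\tilde\gamma_1$ and $\tilde\gamma_2$, which lie in the admissible class $W^{1+\frac{n-5}{2p}+,p}(\R^n)\cap L^\infty$ and coincide outside $\Omega$, delivers $\tilde\gamma_1=\tilde\gamma_2$ on $\R^n$, hence $\gamma_1=\gamma_2$ in $\Omega$.

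The main obstacle is Step~2, the $W^{s,p}$-preserving extension across $\partial\Omega$: in the regime $s>1+\tfrac{1}{p}$, one must verify that the extension operator can be chosen to respect both the trace and the first normal derivative, and that joining two such extensions across $\partial\Omega$ really yields a function in $W^{s,p}(\R^n)$. This is where the derivative-recovery Theorem~\ref{thm:recovery}(B) is essential; without it the extension step collapses and \cite[Thm.~4]{PV} would not be applicable in the stated range. Steps~1 and~3 are then essentially bookkeeping.
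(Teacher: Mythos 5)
Your proposal is correct and is precisely the argument the paper intends: the paper gives no detailed proof of this theorem, stating only that it follows from Theorem~\ref{thm:recovery} together with \cite[Thm.~4]{PV}, and the introduction's discussion of extending $\gamma_1,\gamma_2$ to $\R^n$ with $\gamma_1=\gamma_2$ on $\R^n\setminus\Omega$ (trace matching sufficing for $s\le 1+\frac{1}{p}$, normal-derivative matching needed for $s>1+\frac{1}{p}$) is exactly the bridge you describe. Your case split at $n=7$, where $\frac{n-5}{2p}$ first reaches $\frac{1}{p}$, and your observation that the family $f_{1,h}$ is the same for both conductivities once the traces agree, are the right details to check.
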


The reader can consult the symbols and notations used at the end of the article.

\subsection*{Acknowledgments}

I thank Pedro Caro for sharing his many insights with me. This research is supported by the Basque Government through the BERC 2018-2021 program, and by the Spanish State Research Agency through BCAM Severo Ochoa excellence accreditation SEV-2017-0718 and through projects ERCEA Advanced Grant 2014 669689 - HADE and PGC2018-094528-B-I00.

\section{Reconstruction at the Boundary}\label{sec:Recovery}

We assume in this section that $0\in\partial\Omega$ and that Theorem~\ref{thm:Boundary_Approximation} holds for $0$ every time we use it or a variant of it. We assume also that there is a ball $B_\delta(0)$ and a Lipschitz function $\psi$ such that
\begin{equation*}
B_\delta\cap\Omega = \{(x',x_n)\in B_\delta \mid \psi(x')<x_n \}.
\end{equation*}
We assume that $\psi(0)=\nabla\psi(0)=0$ and that $-e_n$ is a Lebesgue point of the outward-pointing normal vector $\nu:=(1+\abs{\nabla\psi}^2)^{-\frac{1}{2}}(\nabla\psi, -1)$.  

\subsection{Value at the Boundary}

The reconstruction of $\gamma|_{\partial\Omega}$ is based on the function $u(x):=x_n/\abs{x}^n$, which solves the boundary value problem $\Delta u = 0$ in the upper half-plane $H_+$, with $u|_{\partial H_+}=c\delta_0$. Since $\gamma\in W^{s,p}\cap L^\infty$, then by Gagliardo-Nirenberg, see \textit{e.g.} \cite{MR3813967}, we can assume that $\gamma\in W^{s,p}(\Omega)$ for $s>\frac{1}{p}$ and $2\le p<\infty$. 

For $h\ll 1$ we define the approximated solutions $u_h(x):=u(x+he_d)$ of \eqref{eq:BVP}, and we define the correction functions $r_h\in H^1_0(\Omega)$ such that
\begin{equation*}
\div{\gamma\nabla(u_h+r_h)} = 0.
\end{equation*}
Thus, we have that
\begin{equation*}
h^n\inner{\Lambda_\gamma (u_h|_{\partial\Omega})}{u_h|_{\partial\Omega}} = h^n\int \gamma\nabla (u_h+r_h)\cdot\nabla u_h.
\end{equation*}
The term $h^n$ is a normalization factor, and the functions $f_{0,h}$ in Theorem~\ref{thm:recovery}(A) are $f_{0,h}:=h^\frac{n}{2}u_h|_{\partial\Omega}$. 

The main part of the integral above is $\int\gamma\nabla u_h\cdot\nabla u_h$, and we extract the value of $\gamma(0)$ from it. We use the dilation $(\nabla u_h)(hx)= h^{-n}\nabla u_1(x)$ to get
\begin{align*}
 \int_\Omega\gamma\nabla u_h\cdot\nabla u_h &= \gamma(0) \int_\Omega\abs{\nabla u_h}^2+ \int_\Omega(\gamma-\gamma(0))\abs{\nabla u_h}^2 \\
&=\gamma(0)h^{-n}\int_{h^{-1}\Omega}\abs{\nabla u_1}^2+ \int_\Omega(\gamma-\gamma(0))\abs{\nabla u_h}^2 \\
&= A_1 + A_2.
\end{align*}
We set the first term as $h^nA_1=c_{0,h}\gamma(0)$. To control $A_2$ we bound it as
\begin{equation*}
\abs{A_2}\le C\int_{\Omega}\abs{\gamma(x)-\gamma(0)}\frac{dx}{\abs{x+he_n}^{2n}}.
\end{equation*}  
When $\abs{x}\ge 5h$ we see that $\abs{x}\sim \abs{x+he_n}$. When $\abs{x}<5h$ we exploit the Lipschitz regularity of the boundary, and notice that $x\in\Omega$ implies that $x_n\ge -L\abs{x'}$ for some $L>0$, so $\abs{x+he_n}\ge h/(1+L^2)^\frac{1}{2}$. Now we apply these estimates and Theorem~\ref{thm:Boundary_Approximation} for some allowable $\alpha>0$ to get
\begin{align*}
\abs{A_2}&\lesssim \sum_{h\le\lambda\le 1}\lambda^{-n}\frac{1}{\lambda^n}\int_{B_\lambda\cap \Omega}\abs{\gamma(x)-\gamma(0)}\,dx + \norm{\gamma}_\infty \\
&\lesssim \sum_{h\le\lambda\le 1}\lambda^{-n+\alpha} + \norm{\gamma}_\infty \\
&= o(h^{-n}).
\end{align*}
The sums here and elsewhere run over dyadic numbers $\lambda=2^k$, for $k$ integer.  This concludes the estimates for the main part.

We turn now to the error term $\div{\gamma\nabla r_h}$. We control it as
\begin{equation*}
\abs{\int \gamma\nabla r_h\cdot\nabla u_h\,dx}\le \norm{r_h}_{H^1}\norm{\div{\gamma\nabla u_h}}_{H^{-1}}.
\end{equation*}
From the \textit{a priori} estimate $\norm{r_h}_{H^1}\le C\norm{\div{\gamma\nabla u_h}}_{H^{-1}}$ we get
\begin{equation*}
\abs{\int \gamma\nabla r_h\cdot\nabla u_h\,dx}\le C\norm{\div{\gamma\nabla u_h}}_{H^{-1}}^2.
\end{equation*}
Since $u_h$ is harmonic, we can bound the operator norm by duality as
\begin{align*}
\abs{\int \gamma\nabla u_h\cdot\nabla\phi} &= \abs{\int (\gamma-\gamma(0))\nabla u_h\cdot\nabla\phi} \\
&\le C\norm{\phi}_{H^1}\Big(\sum_{h\le\lambda\le 1}\lambda^{-n}\frac{1}{\lambda^n}\int_{B_\lambda\cap\Omega}\abs{\gamma-\gamma(0)}^2\,dx + \norm{\gamma}_\infty^2 \Big)^\frac{1}{2} \\
&= \norm{\phi}_{H^1}O(h^{-\frac{n}{2}+\alpha}),
\end{align*}
where $\alpha>0$. Hence, we get
\begin{equation*}
h^n\norm{\div{\gamma\nabla u_h}}_{H^{-1}}^2 = o(1),
\end{equation*}
which concludes the proof of Theorem~\ref{thm:recovery}(A).

We end this section with an estimate for $c_{0,h}=\int_{h^{-1}\Omega}\abs{\nabla u_1}^2$ in \eqref{eq:thm_recovery_A} when the boundary is $C^1$. We fix $\delta\ll 1$ and write
\begin{equation*}
\int_{h^{-1}\Omega}\abs{\nabla u_1}^2 = \int_{h^{-1}(B_\delta\cap\Omega)}\abs{\nabla u_1}^2 + \int_{h^{-1}(B_\delta^c\cap\Omega)}\abs{\nabla u_1}^2.
\end{equation*}
Since $\abs{\nabla u_1}^2$ is integrable, the second term goes to zero as $h\to 0$. We split the first term as
\begin{align*}
\int_{h^{-1}(B_\delta\cap\Omega)}\abs{\nabla u_1}^2 &= \int_{h^{-1}(B_\delta\cap H_+)}\abs{\nabla u_1}^2 + \\
&\hspace*{1cm}+ \Big[\int_{h^{-1}(\Omega\backslash (B_\delta\cap H_+))}\abs{\nabla u_1}^2 - \int_{h^{-1}((B_\delta\cap H_+)\backslash\Omega)}\abs{\nabla u_1}^2\Big] \\
&= B_1 + B_2.
\end{align*}  
The term $B_1$ tends to $\int_{H_+}\abs{\nabla u_1}^2$, and for the second term we have that
\begin{equation*}
\abs{B_2}\lesssim h^{-1}\int_{\abs{x'}\le \delta h^{-1}}\frac{1}{\japan{x'}^{2n}}\abs{\psi(hx')}\,dx' = \frac{1}{h^n}\int_{\abs{x'}\le\delta}\frac{1}{\japan{x'/h}^{2n}}\abs{\psi(x')}\,dx'\to 0.
\end{equation*}
Then $c_{0,h} = \int_{H_+}\abs{\nabla u_1}^2+o(1)$.

\subsection{Normal Derivative at the Boundary}

We will recover $\partial_\nu\gamma$ with the aid of the functions $v_h=\gamma^{-\frac{1}{2}}u_h$. Since $\gamma\in W^{s,p}\cap L^\infty$, then by Gagliardo-Nirenberg we can assume that $\gamma\in W^{s,p}(\Omega)$ for $s>1+\frac{1}{p}$ and $2\le p<\infty$; in fact, if $1<p<2$ and $s>\frac{3}{p}$, then $\gamma\in W^{s',2}(\Omega)$ for $s'=\frac{p}{2}s> 1+\frac{1}{2}$.

We define again a correction function $r_h\in H^1_0(\Omega)$ such that
\begin{equation*}
\div{\gamma\nabla(v_h+r_h)}=0.
\end{equation*}
In this case the functions $f_{1,h}$ in Theorem~\ref{thm:recovery}(B) are $f_{1,h}:=h^\frac{n}{2}v_h|_{\partial\Omega}$; the use of these functions is licit because we already know the value of $\gamma$ at the boundary. We repeat here the arguments in the previous section, but now the computations are longer. 

For the main term we have that
\begin{align}
\int_\Omega \gamma\nabla v_h\cdot \nabla v_h &= \int \abs{\nabla u_h}^2 + 2\int \gamma^\frac{1}{2}\nabla\gamma^{-\frac{1}{2}}u_h\nabla u_h + \int (\gamma^\frac{1}{2}\nabla\gamma^{-\frac{1}{2}})\cdot(\gamma^\frac{1}{2}\nabla\gamma^{-\frac{1}{2}})u_h^2  \notag \\
&=\int \abs{\nabla u_h}^2 - \int\nabla\log\gamma \cdot u_h\nabla u_h + \frac{1}{4}\int \abs{\nabla\log\gamma}^2u_h^2 \notag \\
&= A_1 + A_2 + A_3 \label{eq:Main_Term_Normal_Values}
\end{align}
The principal term in the asymptotic expansion is $h^n A_1=c_{0,h}$; since this term does not involve the conductivity, we can subtract it harmlessly. 

The next term is $A_2$, and we estimate it as
\begin{align*}
\int\nabla\log\gamma \cdot u_h\nabla u_h &= \int_\Omega\nabla\log\gamma(0) \cdot u_h\nabla u_h + \int(\nabla\log\gamma-\nabla\log\gamma(0)) \cdot u_h\nabla u_h  \\
&= \frac{1}{2}\int_{\partial\Omega}u_h^2\nabla\log\gamma(0)\cdot\nu + \int(\nabla\log\gamma-\nabla\log\gamma(0)) \cdot u_h\nabla u_h \\
&=\inlabel{eq:A_2Main} + \inlabel{eq:A_2Error}
\end{align*} 
The term \eqref{eq:A_2Main} contains the information about the normal derivative at the boundary, and it has order $h^{-n+1}$ in the asymptotic expansion. We thus have that
\begin{equation*}
h^n\cdot\eqref{eq:A_2Main} = \frac{h^n}{2}\partial_n\log\gamma(0)\int_{\partial\Omega}u_h^2 + \frac{h^n}{2}\int_{\partial\Omega}u_h^2\nabla\log\gamma(0)\cdot(\nu-e_n).
\end{equation*}
We set $c_{1,h}:=-\frac{1}{2}h^{n-1}\int_{\partial\Omega}u_h^2$ and bound the remaining term as
\begin{equation*}
\abs{\int_{\partial\Omega}u_h^2\nabla\log\gamma(0)\cdot(\nu-e_n)} \lesssim \sum_{h\le\lambda\le 1}\lambda^{-n+1}\frac{1}{\lambda^{n-1}}\int_{B_\lambda\cap\partial\Omega}\abs{\nu-e_n} + 1
\end{equation*}
Since $\nu(0)=-e_n$ is a Lebesgue point, then $G(\lambda):=\frac{1}{\lambda^{n-1}}\int_{B_\lambda\cap\partial\Omega}\abs{\nu-e_n}\xrightarrow{\lambda\to 0} 0$; furthermore, $G$ is uniformly bounded, so by the dominated convergence theorem we get
\begin{equation*}
h^{n-1}\sum_{h\le\lambda\le 1}\lambda^{-n+1}\frac{1}{\lambda^{n-1}}\int_{B_\lambda\cap\partial\Omega}\abs{\nu-e_n} = \sum_{1\le \mu\le h^{-1}}\mu^{-n+1}G(h\mu)\xrightarrow{h\to 0} 0
\end{equation*}
Then we conclude that 
\begin{equation*}
h^n\cdot\eqref{eq:A_2Main} = -c_{1,h}\partial_n\log\gamma(0)\,h + o(h).
\end{equation*}
To control \eqref{eq:A_2Error} we apply Theorem~\ref{thm:Boundary_Approximation} to $\nabla\log\gamma\in W^{s-1,p}(\Omega)$ to get
\begin{align*}
\abs{\eqref{eq:A_2Error}} &\lesssim \sum_{h\le \lambda\le 1}\lambda^{-n+1}\frac{1}{\lambda^n}\int_{B_\lambda\cap\Omega}\abs{\nabla\log\gamma-\nabla\log\gamma(0)}+\int_{B_1^c\cap\Omega}\abs{\nabla\log\gamma-\nabla\log\gamma(0)} \\
&\lesssim \sum_{h\le \lambda\le 1}\lambda^{-n+1+\alpha} + \norm{\nabla\log\gamma}_{2}+\abs{\nabla\log\gamma(0)} \\
&= O(h^{-n+1+\alpha});
\end{align*}
we have thus $h^n\abs{\eqref{eq:A_2Error}}=o(h)$, which allows us to conclude that the term $A_2$ in \eqref{eq:Main_Term_Normal_Values} is
\begin{equation}\label{eq:A_2}
h^nA_2 = c_{1,h}\partial_n\log\gamma(0)\,h + o(h).
\end{equation}

We are left with the error term $A_3$ in \eqref{eq:Main_Term_Normal_Values}. We bound it using the same arguments as above
\begin{align}
A_3 &= \frac{1}{4}\abs{\nabla\log\gamma(0)}^2\int u_h^2 + \frac{1}{4}\int (\abs{\nabla\log\gamma}^2-\abs{\nabla\log\gamma(0)}^2)u_h^2 \notag \\
&\lesssim \int_{\Omega} u_h^2 + \sum_{h\le\lambda\le 1}\lambda^{-n+2}\frac{1}{\lambda^n}\int_{B_\lambda\cap\Omega}(\abs{\nabla\log\gamma}^2-\abs{\nabla\log\gamma(0)}^2) + 1 \notag \\
&= o(h^{-n+1}). \label{eq:A_3}
\end{align}
The estimate we used here to approximate the value of $\abs{\nabla\log\gamma}^2$ at the boundary is not contained in Theorem~\ref{thm:Boundary_Approximation}, and the reader is referred instead to Corollary~\ref{cor:difference_control_square} in the next section. We collect the estimates \eqref{eq:A_2} and \eqref{eq:A_3} to find 
\begin{equation}\label{eq:Derivative_Main_Part}
h^n\int_\Omega \gamma\nabla v_h\cdot \nabla v_h - c_{0,h} = c_{1,h}\partial_n\log\gamma(0)h+ o(h),
\end{equation}
which is what we wanted.

We deal with the error term as before. We have that
\begin{equation*}
\abs{\int \gamma \nabla r_h\cdot\nabla v_h} \le C\norm{\div{\gamma\nabla v_h}}_{H^{-1}}^2.
\end{equation*}
We estimate the norm by duality as
\begin{align*}
\int \gamma\nabla v_h\cdot\nabla \phi &= -\int (\nabla\gamma^\frac{1}{2})u_h\nabla\phi + \int\gamma^\frac{1}{2}\nabla u_h\cdot\nabla\phi \\
&= -\int \nabla\gamma^\frac{1}{2}\cdot\nabla(u_h\phi) \\
&= -\int (\nabla\gamma^\frac{1}{2}-\nabla\gamma^\frac{1}{2}(0))\cdot u_h\nabla\phi + \int (\nabla\gamma^\frac{1}{2}-\nabla\gamma^\frac{1}{2}(0))\cdot \phi\nabla u_h \\
&= E_1 + E_2;
\end{align*}
to get the second and third identities we used the divergence theorem, and the identity $\Delta u_h = 0$. We bound the error term $E_1$ as
\begin{align*}
\abs{E_1} &\lesssim \Big(\sum_{h\le\lambda\le 1} \lambda^{-n+2}\frac{1}{\lambda^n}\int_{B_\lambda\cap\Omega}\abs{\nabla\gamma^\frac{1}{2}-\nabla\gamma^\frac{1}{2}(0)}^2\,dx + 1\Big)^\frac{1}{2}\norm{\phi}_{H^1} \\
&=\norm{\phi}_{H^1}O(h^{-\frac{n}{2}+1})
\end{align*}
To bound $E_2$ we need Hardy's inequality.

\begin{theorem}[Hardy's Inequality]
If $f\in H^1(\R^n)$ for $n\ge 3$, then
\begin{equation}\label{eq:Hardy_High}
\int_{\R^n}\frac{\abs{f}^2}{\abs{x}^2}\le C\norm{f}_{H^1}^2.
\end{equation}
If $f\in H^1_0(A_{\delta, R})$ for $A_{\delta,R}:=\{\delta <\abs{x}< R\}\subset\R^2$, then
\begin{equation}\label{eq:Hardy_plane}
\int_{\R^2}\frac{\abs{f}^2}{\abs{x}^2}\le C\Big(\log\big(\frac{R}{\delta}\big)\Big)^2\norm{f}_{H^1}^2.
\end{equation}
\end{theorem}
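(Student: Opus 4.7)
The plan is to establish both inequalities first for smooth compactly supported test functions and then pass to the limit by density: $C^\infty_c(\R^n)$ is dense in $H^1(\R^n)$, and $C^\infty_c(A_{\delta,R})$ is dense in $H^1_0(A_{\delta,R})$. In each case the core idea is to represent the singular weight $|x|^{-2}$ through a first-order object (a divergence identity in $\R^n$, a radial primitive in the plane) and then integrate by parts or apply Cauchy--Schwarz to trade one factor of $|f|/|x|$ against $|\nabla f|$.

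For the high-dimensional estimate \eqref{eq:Hardy_High} the key identity is $\text{div}(x/|x|^2) = (n-2)/|x|^2$ on $\R^n\setminus\{0\}$, which is meaningful precisely when $n\ge 3$. For $f\in C^\infty_c(\R^n)$, integration by parts (justified by a cut-off around the origin since $x/|x|^2$ is locally integrable when $n\ge 3$) gives
\[
(n-2)\int_{\R^n}\frac{|f|^2}{|x|^2}\,dx = -2\int_{\R^n}\frac{f\,\nabla f\cdot x}{|x|^2}\,dx,
\]
and Cauchy--Schwarz bounds the right-hand side by $2\bigl(\int|f|^2/|x|^2\bigr)^{1/2}\|\nabla f\|_{L^2}$. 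Dividing by the square root of the left-hand side yields the inequality with $C=4/(n-2)^2$.

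The planar estimate \eqref{eq:Hardy_plane} needs a different route, because the divergence identity above degenerates at $n=2$ and there is no Hardy inequality on $\R^2$ without a logarithmic loss. I would work in polar coordinates with $dx = r\,dr\,d\theta$ and exploit the boundary condition $f(\delta,\theta)=0$, which is available since $f\in H^1_0(A_{\delta,R})$. Writing $f(r,\theta)=\int_\delta^r \partial_s f(s,\theta)\,ds$ and applying Cauchy--Schwarz with the weight $s$,
\[
|f(r,\theta)|^2 \le \Bigl(\int_\delta^R \tfrac{ds}{s}\Bigr)\Bigl(\int_\delta^R |\partial_s f(s,\theta)|^2 s\,ds\Bigr) = \log(R/\delta)\int_\delta^R |\partial_s f|^2 s\,ds.
\]
Integrating $|f|^2/r^2$ against $r\,dr\,d\theta$ over $A_{\delta,R}$ pulls out a second factor $\int_\delta^R dr/r = \log(R/\delta)$, and the remaining double integral is bounded by $\|\nabla f\|_{L^2}^2$ since $|\partial_s f|\le|\nabla f|$, yielding \eqref{eq:Hardy_plane}.

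The main obstacle is really the planar case: the logarithmic blow-up is intrinsic, so some care is needed to make sure one actually uses the vanishing of $f$ at $|x|=\delta$ (or $|x|=R$) rather than seek a dimensionless inequality. Beyond this, the only technical point is the density argument, which is routine once one notes that both sides of each inequality are lower semicontinuous in the $H^1$ topology: for a sequence $f_k\to f$ in $H^1$ one extracts an a.e.\ convergent subsequence and applies Fatou's lemma on the left while the right-hand side passes to the limit directly.
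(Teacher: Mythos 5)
Your proof is correct. Note first that the paper does not actually prove this theorem: it defers entirely to the citation \cite[sec.~2]{MR1616905}, remarking only that the planar variant \eqref{eq:Hardy_plane} ``follows after minor changes.'' So there is no in-paper argument to compare against; what you have written is the classical self-contained proof, and it holds up. For $n\ge 3$ the identity $\div{x/\abs{x}^2}=(n-2)/\abs{x}^2$ is right, the boundary term at the origin in the integration by parts is $O(\varepsilon^{n-2})$ and vanishes for $n\ge 3$, and the division by $\bigl(\int\abs{f}^2/\abs{x}^2\bigr)^{1/2}$ is legitimate because that quantity is finite for $f\in C^\infty_c$ (local integrability of $\abs{x}^{-2}$ when $n\ge 3$) and the inequality is trivial if it is zero; you even obtain the sharp-type constant $4/(n-2)^2$ for $\norm{\nabla f}_2^2$, which is stronger than the stated bound in terms of $\norm{f}_{H^1}^2$. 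For $n=2$ your use of the vanishing at $\abs{x}=\delta$ is exactly the point that makes the annular statement work where the whole-plane inequality fails, and the two logarithmic factors appear for the right reasons: one from the weighted Cauchy--Schwarz in $s$, one from the outer integral $\int_\delta^R dr/r$. The closing density-plus-Fatou argument is routine and correctly stated. The only cosmetic remark is that in the radial Cauchy--Schwarz step the inner integrals should initially run to $r$ rather than $R$; enlarging them to $R$ is of course an upper bound, as you implicitly do, so nothing is lost.
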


A beautiful proof can be found in \cite[sec. 2]{MR1616905}. The inequality \eqref{eq:Hardy_plane} is not there, but it follows after minor changes.

For $n\ge 3$, we apply Hardy's inequality with the weight $\abs{x+he_n}^{-2}$ to get
\begin{align*}
\abs{E_2}&\le (\int_\Omega \abs{\nabla\gamma^\frac{1}{2}-\nabla\gamma^\frac{1}{2}(0)}^2\abs{x+he_n}^2\abs{\nabla u_h}^2)^\frac{1}{2}\norm{\phi}_{H^1} \\
&\le \Big(\sum_{h\le\lambda\le 1}\lambda^{-n+2}\frac{1}{\lambda^n}\int_{B_\lambda\cap\Omega}\abs{\nabla\gamma^\frac{1}{2}-\nabla\gamma^\frac{1}{2}(0)}^2 + 1 \Big)^\frac{1}{2}\norm{\phi}_{H^1} \\
&= \norm{\phi}_{H^1}O(h^{-\frac{n}{2}+1}).
\end{align*}
For $n=2$ we get $\abs{E_2}=\norm{\phi}_{H^1}O(\log h^{-1})$. Hence, we conclude that
\begin{equation*}
h^n\int \gamma \nabla r_h\cdot\nabla v_h = o(h).
\end{equation*}
With this and \eqref{eq:Derivative_Main_Part} we get Theorem~\ref{thm:recovery}(B).

\section{Lebesgue Points at the Boundary} \label{sec:Approximation}

In this section we prove Theorem~\ref{thm:Boundary_Approximation}. The main tool to control the value of a function at the boundary is the next theorem. 

\begin{theorem}\label{thm:difference_control}
If $f\in B^{s,p}(\R^n)$ for $1+\frac{1}{p}>s>\frac{1}{p}$, and $\Gamma$ is the graph of a Lipschitz function, then for $0\le\alpha<s-\frac{1}{p}$ it holds that
\begin{equation}\label{eq:difference_control}
\Big(\int_\Gamma\int_{\abs{x-y}\le 1}\frac{\abs{f(x)-f(y)}^q}{\abs{x-y}^{n+\alpha q}}\,dx d\Gamma(y)\Big)^\frac{1}{q} \le C_\alpha\norm{f}_{s,p},\quad \text{where } 1\le q\le p<\infty.
\end{equation}
\end{theorem}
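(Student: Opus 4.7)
The plan is to reduce to a hyperplane model, split the difference into tangential and normal pieces, and control each by the Besov norm of $f$, using that $s>\alpha+1/p$ provides room above the trace threshold.

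First I would perform a bilipschitz change of variables $\Phi:\R^n\to\R^n$ sending $\Gamma$ to the hyperplane $H=\R^{n-1}\times\{0\}$, reducing to $\Gamma=H$. Bilipschitz maps preserve $B^{s,p}$ for $s<1+1/p$: for $s<1$ this follows from the Slobodetski characterization, and in the remaining range one passes to $\nabla f\in B^{s-1,p}$. They distort balls and surface measure only by constants, so the left-hand side transforms by a bounded factor. Writing $y=(y',0)$ and $h=x-y=(h',h_n)$, I would then split
\begin{equation*}
f(y'+h',h_n)-f(y',0) = \bigl[f(y'+h',h_n)-f(y'+h',0)\bigr] + \bigl[f(y'+h',0)-f(y',0)\bigr],
\end{equation*}
into a normal piece $N$ and a tangential piece $T$, so the integral $J$ on the left satisfies $J\lesssim J_N+J_T$.

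For $J_T$, the integrand is independent of $h_n$; integrating $h_n$ out against the weight produces a factor $\sim\abs{h'}^{1-n-\alpha q}$, leaving a Slobodetski seminorm of $f|_\Gamma$ on $\R^{n-1}$ at regularity $\alpha$, restricted to $\abs{h'}\le 1$. By the trace theorem $f|_\Gamma\in B^{s-1/p,p}(\R^{n-1})$, and since $\alpha<s-1/p$, the embedding $B^{s-1/p,p}\hookrightarrow B^{\alpha,q}$ on this bounded region closes the tangential part. For $J_N$, Fubini combined with the translation $z'=y'+h'$ removes the tangential shift from the integrand, and then integrating $h'$ out produces $\sim\abs{h_n}^{-1-\alpha q}$, so
\begin{equation*}
J_N \lesssim \int_{\abs{t}\le 1}\frac{\norm{f(\cdot,t)-f(\cdot,0)}_{L^q(\R^{n-1})}^q}{\abs{t}^{1+\alpha q}}\,dt.
\end{equation*}

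The heart of the argument is this one-dimensional normal-trace estimate. For $q=p$, I would regard $t\mapsto f(\cdot,t)$ as a vector-valued map in $B^{s,p}(\R;L^p(\R^{n-1}))$ and obtain the bound by a Littlewood-Paley/Bernstein argument that uses $s>\alpha+1/p$. To pass from $q=p$ down to $1\le q<p$, since $\R^{n-1}$ has infinite measure one cannot invoke $L^p\subset L^q$ directly; instead I would decompose $\abs{h}\sim 2^{-k}$ dyadically, apply H\"older on each bounded shell (incurring a factor $2^{-kn(1/q-1/p)}$), and sum using the geometric room provided by the strict inequality $\alpha<s-1/p$. The main obstacle is the vector-valued normal-trace estimate, where the assumption $s>1/p$ becomes sharp.
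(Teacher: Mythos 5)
Your reduction to the hyperplane is where the argument breaks. The flattening map $\Phi(x',x_n)=(x',x_n+\psi(x'))$ is bilipschitz, but composition with a bilipschitz map is bounded on $B^{s,p}$ only for $s<1$ (via the Slobodetskij characterization, as you say). In the range $1<s<1+\frac1p$, which is part of the theorem's hypothesis, your fix ``pass to $\nabla f\in B^{s-1,p}$'' does not close: the chain rule gives $\nabla(f\circ\Phi)=D\Phi^{T}\,(\nabla f)\circ\Phi$, and $D\Phi$ contains $\nabla\psi$, which is merely $L^\infty$, and bounded functions are not pointwise multipliers of $B^{s-1,p}$ when $s-1>0$. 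Concretely, take $\psi(x')=2^{-k}\sin(2^{k}x_1)$ (Lipschitz constant $1$ uniformly in $k$) and $f(x)=x_n\eta(x)$ for a smooth bump $\eta$; then near the origin $\partial_{1}(f\circ\Phi)=\cos(2^{k}x_1)$, whose local $B^{s-1,p}$ norm is of order $2^{k(s-1)}$, so $\norm{f\circ\Phi}_{B^{s,p}}$ is not controlled by $\norm{f}_{B^{s,p}}$ uniformly over Lipschitz graphs. You can salvage $\alpha<\min(s-\frac1p,\,1-\frac1p)$ by first lowering $s$ below $1$, but that misses the claimed range $\alpha<s-\frac1p$ whenever $s>1$. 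Everything downstream of the flattening is sound: the tangential piece via the trace theorem, the vector-valued normal-trace estimate (which is indeed the heart, and is provable by Littlewood--Paley exactly as you indicate), and the dyadic H\"older device for $q<p$ (legitimate because $\Gamma$ is bounded). The problem is getting to the half-space in the first place.

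The paper avoids this entirely by never flattening: it proves the trace inequality $\norm{P_\lambda f}_{L^q(\Gamma)}\le C\lambda^{1/p}\norm{f}_{L^p(\R^n)}$ directly on the Lipschitz graph (interpolating between an $L^\infty$ bound and an $L^1$ bound that uses only the measure estimate $\abs{B_\mu(z)\cap\Gamma}\lesssim\mu^{n-1}$), then splits $f=P_{\le M}f+P_{>M}f$ and optimizes $M\sim\abs{x}^{-1}$. This is the same Bernstein-type engine you invoke for the normal direction, but run isotropically so that only the bilipschitz distortion of distances and surface measure is ever used, never the composition operator on $B^{s,p}$. To repair your route you would need either extra regularity of $\psi$ (e.g.\ $C^1$, or a multiplier condition on $\nabla\psi$), or to replace the flattening by a graph-intrinsic trace bound of the paper's kind.
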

As a consequence of this theorem we get Theorem~\ref{thm:Boundary_Approximation}, which we restate and prove here.

\begin{customthm}{\ref{thm:Boundary_Approximation}}
{\it
Suppose that $\Omega\subset\R^n$ is a domain with Lipschitz boundary. If $f\in B^{s,p}(\Omega)$ for $1+\frac{1}{p}>s>\frac{1}{p}$, then for $0\le\alpha<s-\frac{1}{p}$ and for $y\in\partial\Omega$ a.e. it holds that
\begin{equation}
\Big(\frac{1}{r^n}\int_{B_r(y)\cap\Omega}\abs{f(x)-f(y)}^q\,dx\Big)^\frac{1}{q} \le Cr^\alpha,\quad \text{where } r\le 1 \text{ and } 1\le q\le p.
\end{equation}
The constant $C$ depends on $y$.
}
\end{customthm}
\begin{proof}
By definition there is some $g\in B^{s,p}(\R^n)$ that extends $f$, and 
\begin{equation}
\int_{B_r(y)\cap\Omega}\abs{f(x)-f(y)}^q\,dx\le \int_{B_r(y)}\abs{g(x)-g(y)}^q\,dx.
\end{equation}
We divide the boundary into pieces $\Gamma\subset\partial\Omega$, where $\Gamma$ is the graph of a Lipschitz function. Since
\begin{equation*}
\frac{1}{r^{n+\alpha q}}\int_{B_r(y)}\abs{g(x)-g(y)}^q\,dx \le \int_{\abs{x-y}\le 1}\frac{\abs{g(x)-g(y)}^q}{\abs{x-y}^{n+\alpha q}}\,dx,
\end{equation*} 
and since the term at the right is finite for $y\in\Gamma\subset\partial\Omega$ a.e. by Theorem~\ref{thm:difference_control}, then we have that
\begin{equation*}
\frac{1}{r^n}\int_{B_r(y)}\abs{g(x)-g(y)}^q\,dx\le Cr^{\alpha q},
\end{equation*}
and the statement of the theorem follows.
\end{proof}

In Theorem~\ref{thm:difference_control} we assumed implicitly that $f\in B^{s,p}(\R^n)$, for $s>1/p$, is well defined in $\Gamma$, but this set has measure zero, so this need some justification. Let $f=\sum_{\lambda\ge 1}P_\lambda f$ be a Littlewood-Paley decomposition, where $(P_\lambda f)^\wedge:=m_\lambda \widehat{f}$, and $m_\lambda(\xi):=m(\xi/\lambda)$ for some smooth multiplier $m$ supported in frequencies $\abs{\xi}\sim 1$; for low frequencies we take a function $m_1$ supported in $\abs{\xi}\lesssim 1$. We choose the representative of $f$ given by $\lim_{M\to\infty}P_{\le M}f(x):= \lim_{M\to\infty}\sum_{1\le \lambda\le M} P_\lambda f(x)$. The following theorem justifies this choice.

\begin{lemma}
Suppose that $\Gamma$ is the graph of a Lipschitz function. If $f\in B^{s,p}(\R^n)$ for $s>1/p$, then $\lim_{M\to\infty}P_{\le M}f(y)$ exits for $y\in\Gamma$ a.e.
\end{lemma}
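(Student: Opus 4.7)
The plan is to prove that $\sum_{\lambda\ge 1}\abs{P_\lambda f(y)}$ is finite for almost every $y\in\Gamma$; this immediately yields absolute (hence ordinary) convergence of the partial sums $P_{\le M}f(y)$. By the triangle inequality for $L^p$ and monotone convergence it is enough to show that $\sum_{\lambda\ge 1}\norm{P_\lambda f}_{L^p(\Gamma)}<\infty$, since then $y\mapsto\sum_\lambda\abs{P_\lambda f(y)}$ belongs to $L^p(\Gamma)$ and is in particular finite a.e.\ on $\Gamma$.

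The key tool I would establish is a trace-type Bernstein inequality: if $g\in L^p(\R^n)$ has Fourier support in $\{\abs{\xi}\lesssim\lambda\}$ and $\Gamma=\{(x',\psi(x')):x'\in\R^{n-1}\}$ is a Lipschitz graph, then
\begin{equation*}
\norm{g}_{L^p(\Gamma)}\le C\,\lambda^{1/p}\norm{g}_{L^p(\R^n)}.
\end{equation*}
The argument uses vertical slicing: for each fixed $x'$, the slice $g_{x'}(t):=g(x',t)$ has one-dimensional Fourier transform supported in $\abs{\xi_n}\lesssim\lambda$, so the classical one-dimensional Bernstein inequality gives $\norm{g_{x'}}_{L^\infty(\R)}\lesssim\lambda^{1/p}\norm{g_{x'}}_{L^p(\R)}$. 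Hence $\abs{g(x',\psi(x'))}^p\lesssim \lambda\int_\R\abs{g(x',t)}^p\,dt$; integrating in $x'$ and using $d\Gamma\sim dx'$ (because $\psi$ is Lipschitz) closes the claim.

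Applying the estimate to $g=P_\lambda f$ and splitting $\lambda^{1/p}=\lambda^{1/p-s}\cdot\lambda^s$, H\"older's inequality in the dyadic variable $\lambda$ with conjugate exponents $p'$ and $p$ yields
\begin{equation*}
\sum_{\lambda\ge 1}\norm{P_\lambda f}_{L^p(\Gamma)}\lesssim\Big(\sum_{\lambda\ge 1}\lambda^{(1/p-s)p'}\Big)^{1/p'}\Big(\sum_{\lambda\ge 1}\lambda^{sp}\norm{P_\lambda f}_{L^p(\R^n)}^p\Big)^{1/p}.
\end{equation*}
The first geometric series converges precisely because $s>1/p$, and the second is comparable to $\norm{f}_{B^{s,p}}$. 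The borderline case $p=1$ is handled in the same spirit by extracting $\sup_{\lambda\ge 1}\lambda^{1-s}=1$ from the sum. The main obstacle is the trace-type Bernstein lemma on the Lipschitz graph $\Gamma$; the vertical-slice trick reduces it to the standard one-dimensional Bernstein inequality, after which the proof is a routine combination of H\"older and the Besov norm characterization.
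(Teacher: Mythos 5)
Your proposal is correct, and the overall skeleton matches the paper's: reduce everything to a trace inequality $\norm{P_\lambda f}_{L^p(\Gamma)}\lesssim\lambda^{1/p}\norm{P_\lambda f}_{L^p(\R^n)}$, then sum the dyadic pieces using H\"older and $s>1/p$. The paper runs the summation as a tail estimate plus Chebyshev (showing $\norm{\sum_{\lambda\ge A}\abs{P_\lambda f}}_{L^p(\Gamma)}\lesssim A^{1/p-s}\norm{f}_{s,p}\to 0$), while you sum the whole series from $\lambda=1$ and conclude absolute convergence a.e.; these are interchangeable, and yours is marginally cleaner since it gives absolute convergence directly. The genuine difference is in how you prove the trace inequality. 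The paper (Lemma~\ref{lemm:trace_bound}) interpolates between $(q,p)=(\infty,\infty)$ and $(1,r)$, using the rapid decay of $\widecheck{m}_\lambda$ together with the geometric bound $\abs{B_\mu(z)\cap\Gamma}\lesssim\mu^{n-1}$; this buys the full range $1\le q\le p$, which the paper needs again in the proof of Theorem~\ref{thm:difference_control}. Your vertical-slicing argument is more elementary and gives the endpoint $q=p$ (which is all this lemma requires) in two lines; the cases $q<p$ would then follow from H\"older when $\Gamma$ has finite measure. One small point of rigor: rather than speaking of the Fourier support of the slice $g_{x'}$ (delicate when $p>2$, since $\widehat{g}$ is only a distribution), it is cleaner to write $P_\lambda f = P_\lambda f \ast_n \chi_\lambda$, a convolution in the last variable only with $\chi_\lambda(t)=\lambda\chi(\lambda t)$ and $\widehat{\chi}=1$ on $\abs{\xi_n}\lesssim 1$; the pointwise bound $\abs{P_\lambda f(x',\psi(x'))}\le\norm{\chi_\lambda}_{p'}\norm{P_\lambda f(x',\cdot)}_{L^p(\R)}=C\lambda^{1/p}\norm{P_\lambda f(x',\cdot)}_{L^p(\R)}$ then follows from H\"older, and integration in $x'$ with $d\Gamma\sim dx'$ finishes the claim exactly as you indicate.
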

\begin{proof}
The set of divergence is
\begin{equation*}
\{y\in\Gamma\mid \limsup_{N\le M\to\infty}\abs{P_{N\le M}f(y)}>0\}=\bigcup_{\mu>0}\{y\in\Gamma\mid \limsup_{N\le M\to\infty}\abs{P_{N\le M}f(y)}>\mu\},
\end{equation*}
then it suffices to prove that each set at the right has measure zero. For each one of these sets and for every $A>0$ we have that
\begin{equation*}
\{y\in\Gamma\mid \limsup_{N\le M\to\infty}\abs{P_{N\le M}f(y)}>\mu\}\subset \{y\in\Gamma\mid \sup_{A\le N\le M}\abs{P_{N\le M}f(y)}>\mu\},
\end{equation*} 
so we only need to show that the sets at the right are as small as we please if we choose $A\gg 1$. We bound their measure as
\begin{align*}
\abs{\{\sup_{A\le N\le M}\abs{P_{N\le M}f(y)}>\mu\}}&\le\abs{\{\sum_{\lambda\ge A}\abs{P_\lambda f(y)}>\mu\}} \\
&\le \frac{1}{\mu^p}\norm{\sum_{\lambda\ge A}\abs{P_\lambda f}}_{L^p(\Gamma)}^p.
\end{align*}
We use the triangle inequality, the trace inequality $\norm{P_\lambda f}_{L^p(\Gamma)}\le C\lambda^\frac{1}{p}\norm{f}_{L^p(\R^n)}$, which we will prove in Lemma~\ref{lemm:trace_bound} below, and Hölder to bound the last term as
\begin{equation*}
\abs{\{\sup_{A\le N\le M}\abs{P_{N\le M}f(y)}>\mu\}}\le C\frac{A^{1-sp}}{\mu^p}\norm{f}_{s,p}^p.
\end{equation*}
Since $1-sp<0$, then the right hand side goes to zero as $A\to \infty$.
\end{proof}

\begin{lemma}[The Trace Inequality]\label{lemm:trace_bound}
Suppose that $m_\lambda(\xi):=m(\xi/\lambda)$ is a smooth multiplier supported in frequencies $\abs{\xi}\sim \lambda$, and that $(P_\lambda f)^\wedge=m_\lambda\hat{f}$ is the associated projection. If $\Gamma$ is the graph of a Lipschitz function, then
\begin{equation}\label{eq:lemm_trace_bound}
\norm{P_\lambda f}_{L^q(\Gamma)}\le C\lambda^\frac{1}{p}\norm{f}_{L^p(\R^n)},\quad \text{for } 1\le q\le p.
\end{equation}  
\end{lemma}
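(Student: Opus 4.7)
The plan is to prove the bound in the case $q=p$ by a fundamental theorem of calculus argument across a vertical strip of thickness $1/\lambda$ just above the graph, and then to reduce the range $1\le q\le p$ to this case. I would parametrize $\Gamma=\{(x',\psi(x')):x'\in\R^{n-1}\}$ with $\psi$ Lipschitz; the surface measure on $\Gamma$ is then comparable to Lebesgue measure on $\R^{n-1}$, so it suffices to bound $\int_{\R^{n-1}}|P_\lambda f(x',\psi(x'))|^p\,dx'$.

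For fixed $x'\in\R^{n-1}$, set $g(t):=|P_\lambda f(x',t)|^p$. By the fundamental theorem of calculus and the chain rule, for any $t\in[\psi(x'),\psi(x')+1/\lambda]$,
\begin{equation*}
g(\psi(x'))\le g(t)+p\int_{\psi(x')}^{\psi(x')+1/\lambda}|P_\lambda f(x',s)|^{p-1}|\partial_n P_\lambda f(x',s)|\,ds.
\end{equation*}
Averaging the left side in $t$ over this strip (which produces the factor $\lambda$), integrating in $x'\in\R^{n-1}$, and applying Hölder to the derivative term give
\begin{equation*}
\|P_\lambda f\|_{L^p(\Gamma)}^p\lesssim \lambda\|P_\lambda f\|_{L^p(\R^n)}^p+\|P_\lambda f\|_{L^p(\R^n)}^{p-1}\|\partial_n P_\lambda f\|_{L^p(\R^n)}.
\end{equation*}

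To close the estimate, I would write $P_\lambda f=K_\lambda\ast f$ with $K_\lambda(x)=\lambda^n K(\lambda x)$ for some $K\in\mathcal{S}(\R^n)$, so that Young's inequality yields $\|P_\lambda f\|_{L^p}\lesssim\|f\|_{L^p}$, while the rescaled kernel $\lambda^{n+1}(\partial_n K)(\lambda\cdot)$ has $L^1$-norm comparable to $\lambda$, giving the Bernstein-type inequality $\|\partial_n P_\lambda f\|_{L^p}\lesssim\lambda\|f\|_{L^p}$. Both terms on the right are then bounded by $C\lambda\|f\|_{L^p}^p$, which proves the result for $q=p$. The range $1\le q<p$ reduces to this case by Hölder's inequality after localizing $\Gamma$, which is all that is needed in the application to the preceding lemma. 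The main subtle point is the choice of strip thickness $1/\lambda$: this matches the natural oscillation scale of functions with Fourier support at frequency $\lambda$, and any other choice either loses control of the derivative term or produces a suboptimal averaging factor.
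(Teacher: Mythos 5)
Your argument is correct, but it takes a genuinely different route from the paper. You prove the endpoint $q=p$ directly by the classical trace--on--a--slab argument: integrate $\tfrac{d}{dt}\abs{P_\lambda f(x',t)}^p$ across a strip of thickness $\lambda^{-1}$ above the graph, average the inequality in $t$ (the left side is independent of $t$; the factor $\lambda$ comes from the average of $g(t)$ on the right), and control the derivative term by H\"older together with the Bernstein bound $\norm{\partial_n P_\lambda f}_{L^p}\lesssim\lambda\norm{f}_{L^p}$; the range $1\le q<p$ then follows from H\"older on $\Gamma$, which needs $\abs{\Gamma}<\infty$ --- harmless here, since the boundary is decomposed into bounded graph pieces and the paper's own proof likewise uses $\text{diam}(\Gamma)<\infty$. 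The paper instead interpolates between the trivial bound $L^\infty(\R^n)\to L^\infty(\Gamma)$ and an $L^r(\R^n)\to L^1(\Gamma)$ bound ($r=p/q$) obtained by estimating $\norm{\int_\Gamma\abs{\widecheck{m}_\lambda(y-\cdot)}\,d\Gamma(y)}_{r'}$ via the rapid decay of the kernel and the measure bound $\abs{B_\mu(z)\cap\Gamma}\lesssim\mu^{n-1}$. Your route is more elementary and exploits the graph structure directly; the paper's route treats all $1\le q\le p$ in one stroke and uses $\Gamma$ only through an upper Ahlfors-type regularity of its surface measure, so it transfers verbatim to more general $(n-1)$-regular sets. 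One point worth making explicit in your write-up: $P_\lambda f$ is smooth (its Fourier transform is compactly supported), so $t\mapsto\abs{P_\lambda f(x',t)}^p$ is locally absolutely continuous with $\abs{g'}\le p\abs{P_\lambda f}^{p-1}\abs{\partial_n P_\lambda f}$ for every $p\ge 1$, which is what legitimates the fundamental theorem of calculus step.
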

\begin{proof}
We interpolate between $(q,p)=(\infty,\infty)$ and $=(1,r)$ for $r=p/q$. For the first point we have that
\begin{equation*}
\norm{P_\lambda f}_{L^\infty(\Gamma)}\le\norm{\widecheck{m}_\lambda}_1\norm{f}_\infty = C\norm{f}_{L^\infty(\R^n)},
\end{equation*}
where $C$ does not depend on $\lambda$.

For the point $(q,p)=(1,r)$ we have that
\begin{align}\label{eq:lemm_trace_measure}
\norm{P_\lambda f}_{L^1(\Gamma)} &\le \int\abs{f(z)}\int_\Gamma \abs{\widecheck{m}_\lambda(y-z)}\,d\Gamma(y)dz \notag \\
&\le \norm{f}_{L^r(\R^n)}\norm{\int_\Gamma \abs{\widecheck{m}_\lambda(y-z)}\,d\Gamma(y)}_{L_z^{r'}}.
\end{align}
By the smoothness of $m$ we have that $\abs{\widecheck{m}_\lambda}\le C\lambda^n\sum_{\mu\ge \lambda^{-1}} (\mu\lambda)^{-N}\ind_{B_\mu}$, where $N\gg 1$. Then
\begin{equation*}
\norm{\int_\Gamma \abs{\widecheck{m}_\lambda(y-z)}\,d\Gamma(y)}_{r'}\le \lambda^n\sum_{\mu\ge \lambda^{-1}}(\mu\lambda)^{-N}\norm{\int \ind_{B_\mu}(y-z)d\Gamma(y)}_{r'},
\end{equation*}
and we define the functions $G_\mu(z):=\int \ind_{B_\mu}(y-z)d\Gamma(y)=\abs{B_\mu(z)\cap\Gamma}$; we use here the induced measure in $\Gamma$. If $N_\mu(\Gamma)$ denotes the $\mu$-neighborhood of $\Gamma$, then we have the following estimates
\begin{equation*}
G_\mu(z)\lesssim 
\begin{cases}
\mu^{n-1}\ind_{N_\mu(\Gamma)} &\text{if } \mu\le \text{diam}(\Gamma) \\
\ind_{N_\mu(\Gamma)} &\text{otherwise}.
\end{cases}
\end{equation*}
Hence,
\begin{align*}
\norm{\int_\Gamma \abs{\widecheck{m}_\lambda(y-z)}\,d\Gamma(y)}_{r'}&\le \lambda^n\sum_{\mu\ge \lambda^{-1}}(\mu\lambda)^{-N}\norm{G_\mu}_{r'} \\
&\le C\lambda^n\Big(\sum_{\lambda^{-1}\le\mu \le \text{diam}(\Gamma)}(\mu\lambda)^{-N}\mu^{n-\frac{1}{r}} + \sum_{\text{diam}(\Gamma)\le\mu}(\mu\lambda)^{-N}\mu^\frac{n}{r'}\Big)\\
&\le C\lambda^\frac{1}{r}.
\end{align*}
We replace this bound in \eqref{eq:lemm_trace_measure} to get $\norm{P_\lambda f}_{L^1(\Gamma)}\le C\lambda^\frac{1}{r}\norm{f}_r$, which concludes the proof.
\end{proof}

Now we are ready to prove Theorem~\ref{thm:difference_control}.

\begin{proof}[Proof of Theorem~\ref{thm:difference_control}]
We follow the arguments in \cite[chp. 5]{MR0290095}. After a change of variables we can write \eqref{eq:difference_control} as
\begin{equation}\label{eq:modulus_norm}
\Big(\int_{\abs{x}\le 1}\frac{\norm{f(x+y)-f(y)}_{L_y^q(\Gamma)}^q}{\abs{x}^{n+\alpha q}}\,dx\Big)^\frac{1}{q} \le C_\alpha\norm{f}_{s,p}.
\end{equation}
To estimate $\norm{f(x+y)-f(y)}_{L_y^q(\Gamma)}$ we write the difference as
\begin{multline*}
f(x+y)-f(y)=(f(x+y) - P_{\le M}f(x+y))+ \\ 
+(P_{\le M}f(x+y)-P_{\le M}f(y))+(P_{\le M}f(y)-f(y)).
\end{multline*}
For the first term $f(x+y) - P_{\le M}f(x+y)=P_{>M}f(x+y)$, we start by applying the triangle inequality to get
\begin{equation*}
\norm{P_{>M}f(x+y)}_{L_y^q(\Gamma)}\le \sum_{\lambda>M}\norm{P_\lambda f(x+y)}_{L_y^q(\Gamma)};
\end{equation*}
now we use the trace inequality $\norm{P_\lambda g}_{L^q(\Gamma)}\le C\lambda^\frac{1}{p}\norm{g}_{L^p(\R^n)}$ in Lemma~\ref{lemm:trace_bound} to get
\begin{equation}\label{eq:high_frequencies}
\norm{P_{>M}f(x+y)}_{L_y^q(\Gamma)}\le C\sum_{\lambda>M}\lambda^\frac{1}{p}\norm{P_\lambda f}_{L^p(\R^n)}\le CM^{\frac{1}{p}-s}\norm{f}_{s,p}.
\end{equation}
We estimate the difference $P_{\le M}f(y)-f(y)$ in the same way.

For the difference $P_{\le M}f(x+y)-P_{\le M}f(y)$ we use the smoothness of the projection to write
\begin{equation*}
P_{\le M}f(x+y)-P_{\le M}f(y)=x\cdot\int_0^1\nabla P_{\le M}f(tx+y)\,dt.
\end{equation*}
The multiplier of $\partial_iP_\lambda$ is $\xi_im(\xi/\lambda)=\lambda\tilde{m}(\xi/\lambda)$, where $\tilde{m}(\xi):=\xi_im(\xi)$ is a smooth function supported in $\abs{\xi}\sim 1$. By Minkowski and the trace inequality we have that
\begin{align}\label{eq:low_frequencies}
\norm{P_{\le M}f(x+y)-P_{\le M}f(y)}_{L_y^q(\Gamma)}&\le\abs{x}\sum_{\lambda\le M}\lambda\int_0^1\norm{\tilde{P}_{\lambda}f(tx+y)}_{L_y^q(\Gamma)}\,dt \notag \\
&\le \abs{x}\sum_{\lambda\le M}\lambda^{1+\frac{1}{p}}\norm{P_{\lambda}f}_{L^p(\R^n)} \notag \\
&\le C\abs{x}M^{1+\frac{1}{p}-s}\norm{f}_{s,p}.
\end{align}
We bound $\norm{f(x+y)-f(y)}_{L_y^q(\Gamma)}$ using \eqref{eq:high_frequencies} and \eqref{eq:low_frequencies} to get
\begin{align*}
\norm{f(x+y)-f(y)}_{L_y^q(\Gamma)}&\le C(M^{\frac{1}{p}-s}+\abs{x}M^{1+\frac{1}{p}-s})\norm{f}_{s,p} \\
&\le C\abs{x}^{s-\frac{1}{p}}\norm{f}_{s,p};
\end{align*}
we obtained the last inequality by choosing $M\sim\abs{x}^{-1}$. We insert this bound into the term at the left of \eqref{eq:modulus_norm} to get 
\begin{equation*}
\Big(\int_{\abs{x}\le 1}\frac{\norm{f(x+y)-f(y)}_{L_y^q(\Gamma)}^q}{\abs{x}^{n+\alpha q}}\,dx\Big)^\frac{1}{q} \le C\Big(\int_{\abs{x}\le 1}\abs{x}^{-n+q(s-\frac{1}{p}-\alpha)}\,dx\Big)^\frac{1}{q}\norm{f}_{s,p},
\end{equation*}
and the last integral is bounded whenever $\alpha<s-\frac{1}{p}$.

\end{proof}

In Section~\ref{sec:Recovery} we needed also the following result.
\begin{corollary}\label{cor:difference_control_square}
Suppose that $2\le p<\infty$. If $f\in B^{s,p}(\R^n)$ for $1+\frac{1}{p}>s>\frac{1}{p}$, and $\Gamma$ is the graph of a Lipschitz function, then for $0\le\alpha<s-\frac{1}{p}$ it holds that
\begin{equation}\label{eq:difference_control_square}
\int_\Gamma\int_{\abs{x-y}\le 1}\frac{\abs{f^2(x)-f^2(y)}}{\abs{x-y}^{n+\alpha}}\,dx d\Gamma(y) \le C_\alpha\norm{f}_{s,p}^2.
\end{equation}
Consequently
\begin{equation}\label{eq:cor_Square_approach}
\frac{1}{r^n}\int_{B_r(y)\cap\Omega}\abs{f^2(x)-f^2(y)}\,dx\le C_y r^\alpha \quad\text{for } y\in\partial\Omega \text{ a.e.}
\end{equation}
\end{corollary}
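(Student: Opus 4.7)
My strategy is to reduce (\ref{eq:difference_control_square}) to the already-proved Theorem~\ref{thm:difference_control} via the algebraic identity
\begin{equation*}
f^2(x)-f^2(y) = \bigl(f(x)-f(y)\bigr)^2 + 2\,f(y)\bigl(f(x)-f(y)\bigr).
\end{equation*}
The square $(f(x)-f(y))^2$ is handled directly by Theorem~\ref{thm:difference_control} with $q=2$ (admissible because $p\ge 2$) and exponent $\alpha/2<s-\frac{1}{p}$, giving a contribution bounded by $C\|f\|_{s,p}^2$.

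For the cross term I would \emph{not} apply Cauchy--Schwarz with equal weights, because that leaves the divergent integral $\int_{|x-y|\le 1}|x-y|^{-n}\,dx$. Instead I exploit that the hypothesis $\alpha<s-\frac{1}{p}$ is strict, pick some $\alpha'\in(\alpha,s-\frac{1}{p})$, and split
\begin{equation*}
\frac{1}{|x-y|^{n+\alpha}}=\frac{1}{|x-y|^{n/2+\alpha'}}\cdot\frac{1}{|x-y|^{n/2+\alpha-\alpha'}}
\end{equation*}
before Cauchy--Schwarz in $dx\,d\Gamma(y)$. The first resulting factor is exactly Theorem~\ref{thm:difference_control} with $q=2$ and parameter $\alpha'$, bounded by $C\|f\|_{s,p}$; in the second factor, the inner $x$-integral $\int_{|x-y|\le 1}|x-y|^{-(n-2(\alpha'-\alpha))}\,dx$ is a finite constant because $\alpha'-\alpha>0$, leaving only the trace norm $\|f\|_{L^2(\Gamma)}$.

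It therefore remains to prove the trace inequality $\|f\|_{L^2(\Gamma)}\le C\|f\|_{s,p}$. Lemma~\ref{lemm:trace_bound} with $q=2$ (once more requiring $p\ge 2$) gives $\|P_\lambda f\|_{L^2(\Gamma)}\le C\lambda^{1/p}\|P_\lambda f\|_{L^p(\R^n)}$; summing via the triangle inequality over a Littlewood--Paley decomposition and applying Hölder in the dyadic parameter $\lambda$ produces a geometric series that converges precisely when $s>\frac{1}{p}$, mimicking the summation already used in the lemma preceding Lemma~\ref{lemm:trace_bound}. Collecting the three contributions yields (\ref{eq:difference_control_square}).

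The pointwise statement (\ref{eq:cor_Square_approach}) follows by the same reduction that produced Theorem~\ref{thm:Boundary_Approximation} from Theorem~\ref{thm:difference_control}: cover $\partial\Omega$ by a countable family of Lipschitz graphs, apply (\ref{eq:difference_control_square}) on each of them, infer that the inner integral in $x$ is finite for a.e.\ $y\in\partial\Omega$, and extract the averaged bound on $B_r(y)\cap\Omega$ (which is majorised by the integral over $B_r(y)$). I expect the cross-term step to be the only real obstacle: one must spend a little regularity $\alpha'-\alpha>0$ to bring the auxiliary kernel below the critical dimension, which is exactly why the inequality $\alpha<s-\frac{1}{p}$ in the hypothesis must be strict rather than sharp.
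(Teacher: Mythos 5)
Your proof is correct, but it follows a different decomposition from the paper's. The paper factors $f^2(x)-f^2(y)=(f(x)+f(y))(f(x)-f(y))$, applies H\"older on $\Gamma$ with exponents $(p,p')$ to get $\norm{f^2(x+y)-f^2(y)}_{L_y^1(\Gamma)}\le 2\norm{f}_{s,p}\norm{f(x+y)-f(y)}_{L_y^{p'}(\Gamma)}$ via the trace theorem, and then applies H\"older in the $x$-variable (replacing $\alpha$ by $\alpha+\varepsilon$) so as to invoke Theorem~\ref{thm:difference_control} with $q=p'$ --- which is where the hypothesis $p\ge 2$ enters, as $p'\le p$. You instead expand $f^2(x)-f^2(y)=(f(x)-f(y))^2+2f(y)(f(x)-f(y))$, dispatch the square with $q=2$ and parameter $\alpha/2$, and handle the cross term by an asymmetric kernel split plus Cauchy--Schwarz; for you $p\ge 2$ enters as the condition $q=2\le p$. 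The two arguments spend the strict inequality $\alpha<s-\frac1p$ in the same way (your $\alpha'-\alpha$ plays the role of the paper's $\varepsilon$), and each needs a trace bound on $\Gamma$: the paper cites the trace theorem for $\norm{f}_{L^p(\Gamma)}$, while you derive $\norm{f}_{L^2(\Gamma)}\lesssim\norm{f}_{s,p}$ from Lemma~\ref{lemm:trace_bound} by a Littlewood--Paley summation, which is a slightly more self-contained route. Your reduction of \eqref{eq:cor_Square_approach} from \eqref{eq:difference_control_square} matches the paper's deduction of Theorem~\ref{thm:Boundary_Approximation} from Theorem~\ref{thm:difference_control}. Both proofs are sound; yours is marginally longer but makes the role of $p\ge 2$ and of the strictness of $\alpha<s-\frac1p$ more transparent.
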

\begin{proof}
We write again the expected estimate as
\begin{equation*}
\int_{\abs{x}\le 1}\frac{\norm{f^2(x+y)-f^2(y)}_{L_y^1(\Gamma)}}{\abs{x}^{n+\alpha }}\,dx\le C_\alpha\norm{f}_{s,p}^2.
\end{equation*}
By Hölder and by the trace theorem we can bound the difference as
\begin{equation*}
\norm{f^2(x+y)-f^2(y)}_{L_y^1(\Gamma)}\le 2\norm{f}_{s,p}\norm{f(x+y)-f(y)}_{L_y^{p'}(\Gamma)}.
\end{equation*}
Then by Hölder and by Theorem~\ref{thm:difference_control} we get
\begin{align*}
\int_{\abs{x}\le 1}\frac{\norm{f^2(x+y)-f^2(y)}_{L_y^1(\Gamma)}}{\abs{x}^{n+\alpha }}\,dx &\le C\norm{f}_{s,p}\int_{\abs{x}\le 1}\frac{\norm{f(x+y)-f(y)}_{L_y^{p'}(\Gamma)}}{\abs{x}^{n+\alpha}}\,dx \\
&\le C\norm{f}_{s,p}\Big(\int_{\abs{x}\le 1}\frac{\norm{f(x+y)-f(y)}_{L_y^{p'}(\Gamma)}^{p'}}{\abs{x}^{n+(\alpha+\varepsilon)p'}}\,dx\Big)^\frac{1}{p'} \\
&\le C\norm{f}_{s,p}^2,
\end{align*}
where $\varepsilon\ll 1$. This is inequality \eqref{eq:difference_control_square}, and \eqref{eq:cor_Square_approach} follows.
\end{proof}

\subsection*{Notations}

\begin{itemize}
\item Various: $B_r$ is a ball or radius $r$, usually centered at zero, and we make the center explicit by writing $B_r(y)$. $H_+$ is the upper half plane. When we write $\sum_{\lambda}$, we are summing over dyadic numbers $\lambda=2^k$ for $k$ integer.
\item If $E$ is a set in some measure space, then $\abs{E}$ denotes the size of $E$ for the corresponding measure.
\item We write $A=O(B)$, or $A\lesssim B$, if $A\le CB$ for some $C>0$; if $cB\le A\le CB$ for some constants $0<c,C$, then $A\sim B$. We write $A\ll 1$ if $A$ is sufficiently small. We write $A=o(B)$ if $A$ and $B$ are functions of $h$ and $\lim_{h\to 0}A/B=0$. 
\item Projections: for a dyadic number $\lambda$ we define the projection $(P_\lambda f)^\wedge = m_\lambda\widehat{f}$, where $m_\lambda(\xi)=m(\xi/\lambda)$ and $m$ is a smooth multiplier supported in frequencies $\abs{\xi}\sim 1$; for $\lambda=1$ we take instead $m_1$ supported in $\abs{\xi}\lesssim 1$.
\item Function spaces: Let $1< p<\infty$. We denote by $B^{s,p}(\R^n)$ the Besov space of distributions $f$ for which
\begin{equation*}
\norm{f}_{B^{s,p}}^p := \norm{P_1f}_p^p + \sum_{\lambda\ge 1}\lambda^{sp}\norm{P_\lambda f}^p_p <\infty.
\end{equation*}
The Sobolev-Slobodeskij space $W^{s,p}(\R^n)$ equals $B^{s,p}(\R^n)$ for $s\neq$ integer. For $s$ integer, $W^{s,p}(\R^n)$ is the space of distributions $f$ for which
\begin{equation*}
\norm{f}_{W^{s,p}} := \sum_{\abs{\alpha}\le s}\norm{D^\alpha f}_p <\infty.
\end{equation*} 
We set $H^s(\R^n):=W^{s,2}(\R^n)$. The spaces $B^{s,p}(\Omega)$, $W^{s,p}(\Omega)$ and $H^s(\Omega)$ are defined by restriction of functions in $\R^n$ to $\Omega$. The space $H^s_0(\Omega)$ is the completion in the norm $H^s(\R^n)$ of test functions compactly supported in $\Omega$. 
\end{itemize}

\bibliographystyle{plain}
\bibliography{../Calderon_Problem,../Restriction,../Function_Space}

\end{document}